\documentclass[11pt,a4paper]{article}

\usepackage{amsthm,amsmath,amsfonts,amssymb}
\usepackage[boxed,linesnumbered]{algorithm2e}
\usepackage{color}
\usepackage{float}
\usepackage{graphicx}
\usepackage{url}
\usepackage{latexsym,epsfig,array}


\newcommand{\NN}{\mathbb{N}}

\newcommand{\spb}[1]{\smallskip}
\newcommand{\mpb}[1]{\medskip}
\newcommand{\bpb}[1]{\bigskip}



\DeclareGraphicsExtensions{.jpg,.pdf,.mps,.png}
\pretolerance 10000

\title{New results for the Mondrian art problem}
\author{C. Dalf\'o$^a$, M. A. Fiol$^b$, N. L\'opez$^c$ \\
{\small $^a$Dept. de Matem\`atica, Universitat de Lleida}\\
{\small Igualada (Barcelona), Catalonia}\\
{\small {\tt cristina.dalfo@udl.cat}}\\
{\small $^{b}$Dept. de Matem\`atiques, Universitat Polit\`ecnica de Catalunya} \\
{\small Barcelona Graduate School of Mathematics} \\
{\small Barcelona, Catalonia} \\
{\small {\tt miguel.angel.fiol@upc.edu}} \\
{\small $^c$Dept. de Matem\`atica, Universitat de Lleida}\\
{\small Lleida, Spain}\\
{\small {\tt nacho.lopez@udl.cat}}\\
}

\newtheorem{proposition}{Proposition}[section]
\newtheorem{corollary}{Corollary}[section]
\newtheorem{lemma}{Lemma}[section]
\newtheorem{theorem}{Theorem}[section]

\newcommand\blfootnote[1]{%
	\begingroup
	\renewcommand\thefootnote{}\footnote{#1}%
	\addtocounter{footnote}{-1}%
	\endgroup
}

\begin{document}

\date{\today}

\maketitle

\blfootnote{
	\begin{minipage}[l]{0.28\textwidth} \includegraphics[trim=10cm 6cm 10cm 5cm,clip,scale=0.15]{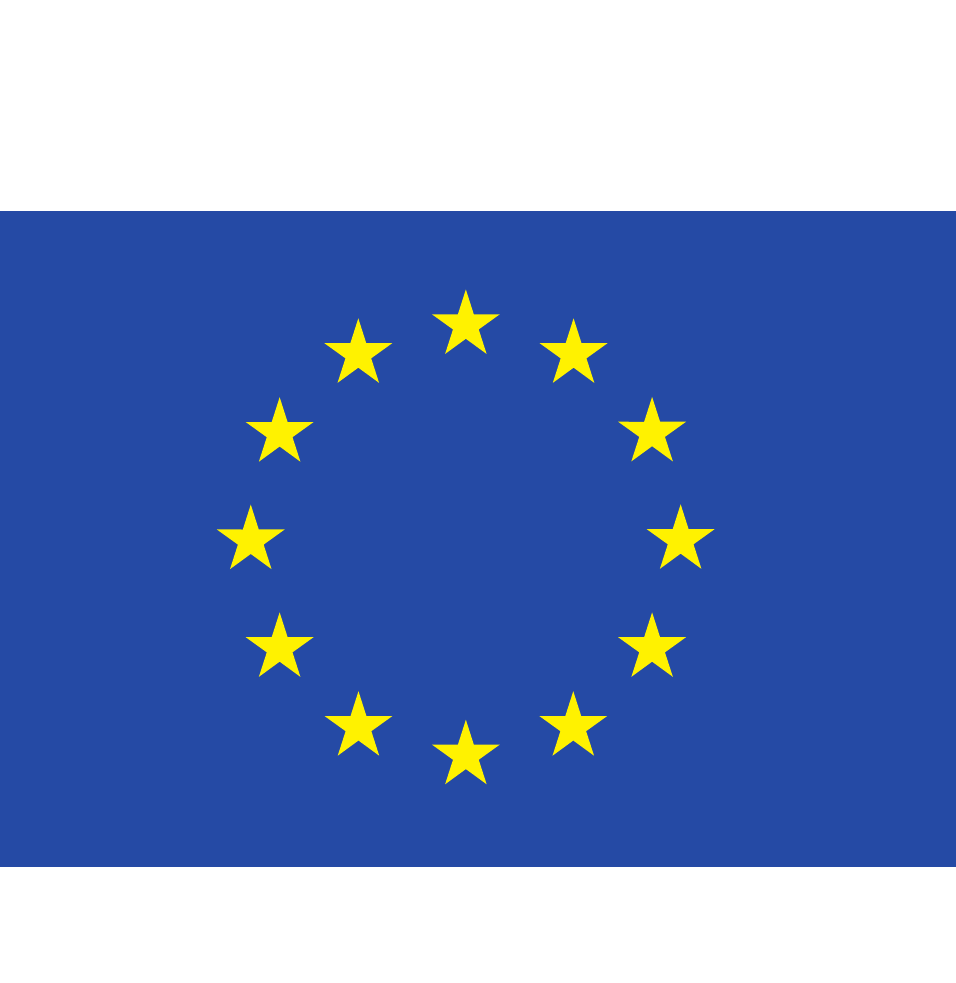} \end{minipage}  \hspace{-2cm} \begin{minipage}[l][1cm]{0.79\textwidth}
		The research of the first author has also received funding from the European Union's Horizon 2020 research and innovation programme under the Marie Sk\l odowska-Curie grant agreement No 734922.
\end{minipage}
}

\begin{abstract}
The Mondrian problem consists of dissecting a square of side length $n\in \NN$ into non-congruent rectangles with natural length sides such that the difference $d(n)$ between the largest and the smallest areas of the rectangles partitioning the square is minimum. In this paper,  we compute some bounds on $d(n)$ in terms of the number of rectangles of the square partition. These bounds provide us optimal partitions for some values of $n \in \NN$. We provide a sequence of square partitions such that $d(n)/n^2$ tends to zero for $n$ large enough. For the case of `perfect' partitions,  that is, with 
$d(n)=0$, we show that, for any fixed powers $s_1,\ldots, s_m$, a square with side length $n=p_1^{s_1}\cdots p_m^{s_m}$, can have a perfect Mondrian partition only if $p_1$ satisfies a given lower bound.   
Moreover,  if $n(x)$ is the number of side lengths $x$ (with $n\le x$) of squares not having a perfect partition, we prove that its `density' $\frac{n(x)}{x}$ is asymptotic to $\frac{(\log(\log(x))^2}{2\log x}$, which improves previous results.
\end{abstract}

\noindent\emph{Keywords:} Non-congruent rectangles, partition, Mondrian problem.

\noindent\emph{MSC 2010:} 05A17.

\section{Introduction}

Let us consider a two-dimensional grid of dimensions $n \times n$, for a given positive integer $n$. We want to partition the whole area of this grid by using only rectangles of different integer dimensions. Overlapping rectangles is not allowed, and once a rectangle of dimensions $a \times b$ has been used, we cannot use another one either with dimensions $a \times b$ or $b \times a$ (a ninety-degree rotation of the original rectangle). When the whole area of the grid has been filled by rectangles, we score it by computing the difference $d(n)$ between the areas of the largest and the smallest rectangles. The Mondrian art problem is to find the smallest $d(n)$ among all possible fillings of the grid. This problem is based on the artwork of the artist Piet Mondrian (1872-1944). In his famous paintings, Mondrian uses canvas tessellated by primary-colored rectangles. Paraphrasing O'Kuhn \cite{okuhn2018mondrian}: `The idea of the puzzle is that an art critic has ordered Mondrian only to create paintings whose rectangles are all incongruent to one another and only have integer side lengths. Furthermore, he can only use a square canvas whose side length is also an integer. Aggravated, Mondrian still wants to create works whose areas of the rectangles are all as close as possible. The work contained in \cite{okuhn2018mondrian} is partial progress towards answering the question of whether $d(n)$ can ever equal 0. Besides, optimal solutions for $n \leq 32$ are given in Bassen \cite{Bassen16} using an algorithmic approach. Similar problems, like the decomposition of a square into rectangles of minimal perimeter \cite{KMW87}, have been studied before.

\subsection*{A mathematical model based on graph theory}
The problem of dissecting rectangles into squares was considered by Brooks, Smith, Stone, and Tutte in \cite{Brooks1987}. Although this 
problem is quite different from the Mondrian problem, its modeling in terms of graphs can be conveniently adapted here. For every squared grid of side length $n$ completely filled with non-overlapping rectangles $R_i$ of dimensions $a_i \times b_i$, for $i \in \{1,\dots,k\}$, we associate a graph $G=(V,A,\omega)$ in the following way: the vertices of $G$ are the horizontal line segments on the squared grid, consisting of a set of horizontal sides of the rectangles $R_i$, sides that  we denote by $P_0,P_1,\dots,P_n$, where $P_0$ and $P_n$ are the upper and lower segments of the squared grid. Every rectangle $R_i$ lies between two horizontal lines. Hence, the arcs of $G$ are precisely the rectangles of the squared grid, that is, $A=\{R_1,R_2,\dots,R_k\}$. We also define the weight of an arc as the dimensions of the given rectangle $\omega(R_i)=(a_i,b_i)$ (see Figure \ref{fig:examp1} for an example).

\begin{figure}[htp]
\centering
\includegraphics[width=16cm]{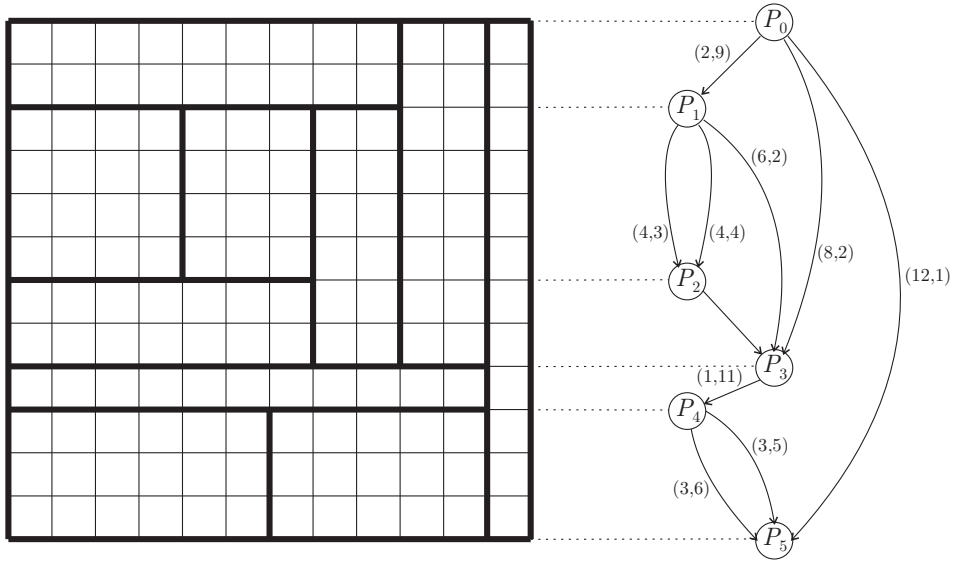}
\vskip-5cm
\caption{A rectangle partition of a square of side length $12$ and its corresponding graph.}\label{fig:examp1}
\end{figure}

Any graph obtained from a rectangle partition of a given square has nice properties. This type of graph is called an {\em electrical network} because its properties can be described in electrical terms (Kirchoff's law, Ohm's law, etc.). Since this mathematical model of the problem is reversible, that is, every rectangle partition of a square can be obtained from an  electrical network, we have a different approach to the problem that could be of some help to make progress. Nevertheless, we do not follow this line of research, but we think it is worth mentioning it here. \\

We organize this paper as follows. In the following section, we provide exact values of the defect $d_k(n)$ for any partition with a small number of given rectangles $k$. We show that optimal Mondrian partitions for squares of side length $3,5$ and $9$ are obtained using exactly $3$ rectangles. A general Mondrian partition of the square is provided in Section \ref{sec:gen}. This construction also shows that the defect ratio $d(n)/n^2$ tends to zero when $n \rightarrow \infty$. Finally, section \ref{sec:perfect} deals with perfect Mondrian partitions, that is, those partitions with defect zero. It is shown that the density of those extremal partitions tends to zero and, in fact, they do not exist when $n$ is the product of at most four different prime numbers.

\section{Bounds on $d(n)$ and optimal partitions}

Giving a rectangle partition of a square of side $n$, we say that this partition is {\em optimal} if it has minimum defect $d(n)$ among all possible rectangle partitions of the square. In particular, if $d(n)=0$ for some partition, then such partition is called a {\em perfect Mondrian partition}. So far, the existence of perfect Mondrian partitions is unknown and the problem of finding optimal partitions seems to be very hard in general. At first sight, the computation of $d(n)$ would require to make all possible partitions, which is devastating from a computational point of view. Besides, finding a direct calculation of $d(n)$ (without such partitions) is a challenging question. \\

Another approach consists of finding upper bounds on $d(n)$. In this section, we present an upper bound based on the number of rectangles of the partition. These upper bounds will be of some help to find optimal partitions for specific values of $n$. For any $k \geq 2$, we define $d_k(n)$ as the minimum defect for all possible partitions of the square of side $n$ using exactly $k$ (non-congruent) rectangles. From its own definition, $d(n) \leq d_k(n)$, and another way of calculating $d(n)$ is
$$
d(n)=\min_k \{d_k(n)\}.
$$
The remaining content of this section is devoted to giving the exact values of $d_2(n)$ and $d_3(n)$. Later on, we will see that $d_3(n)$ gives the exact value of $d(n)$ for $n=3,5$, and $9$. \\

The value $d_2(n)$ is easy to compute. There is (up to a ninety-degree rotation) a unique partition of the square using two rectangles (see Figure \ref{fig:square23} (a)). Let $R_i$ have dimensions $a_i \times b_i$ for $i=1,2$. In this situation, $b_1=b_2=n$ and since $R_1$ is not congruent to $R_2$ ($R_1 \not \cong R_2$), we can choose $a_1 > a_2$. Then,  $d_2(n)=(a_1-a_2)n$. Minimizing $d_2(n)$ is equivalent to find $a_1$ and $a_2$ such that $a_1-a_2$ is minimum, and since they are positive integers satisfying $a_1+a_2=n$, the solution is $(a_1,a_2)=(\frac{n}{2}+1,\frac{n}{2}-1)$ whenever $n$ is even, and $(a_1,a_2)=(\frac{n+1}{2}, \frac{n-1}{2})$ otherwise. That is,
\begin{equation}\label{eq:d2n}
d_2(n)=
\left\{
\begin{array}{ccc}
2n & \textrm{ if } & n \equiv 0 \pmod{2}, \\
n & \textrm{ if } & n \equiv 1 \pmod{2}.
\end{array}
\right.
\end{equation}
\begin{figure}[htb]
\centering
\begin{tabular}{ccc}
\includegraphics[width=.3\textwidth]{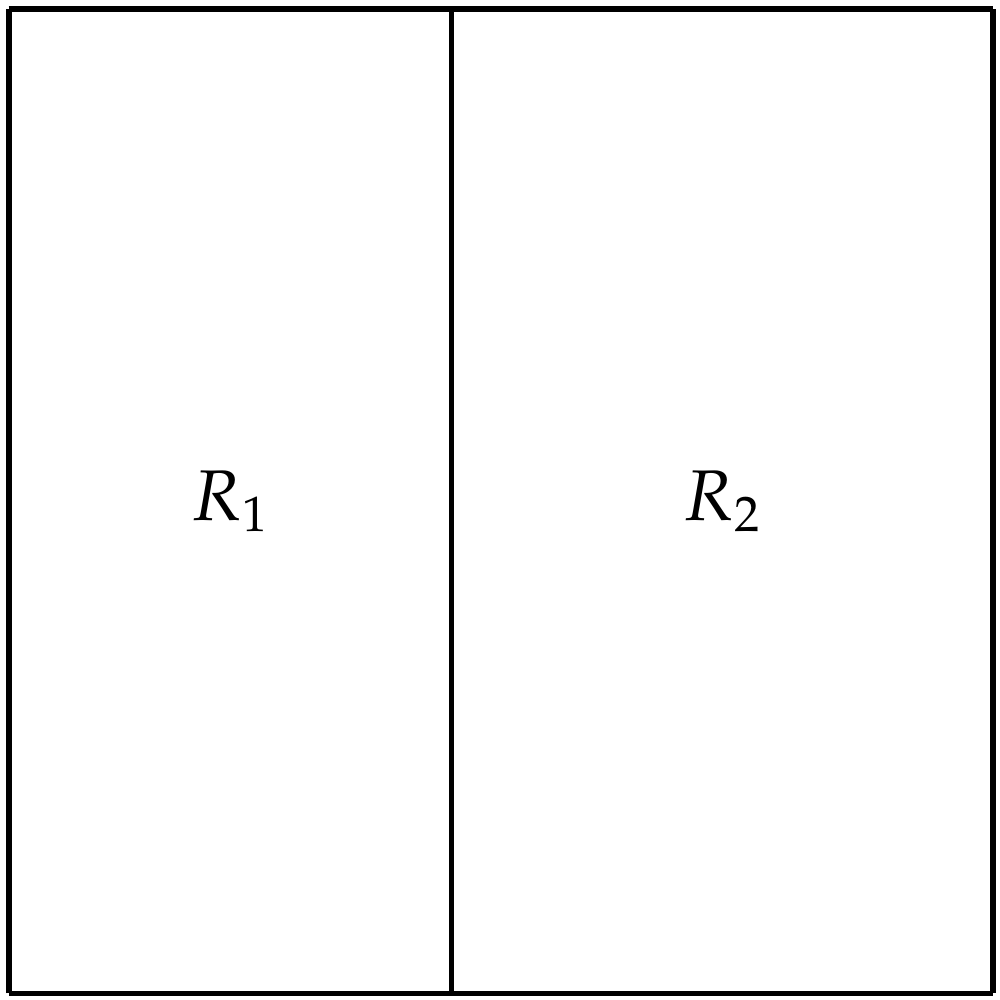} & \includegraphics[width=.3\textwidth]{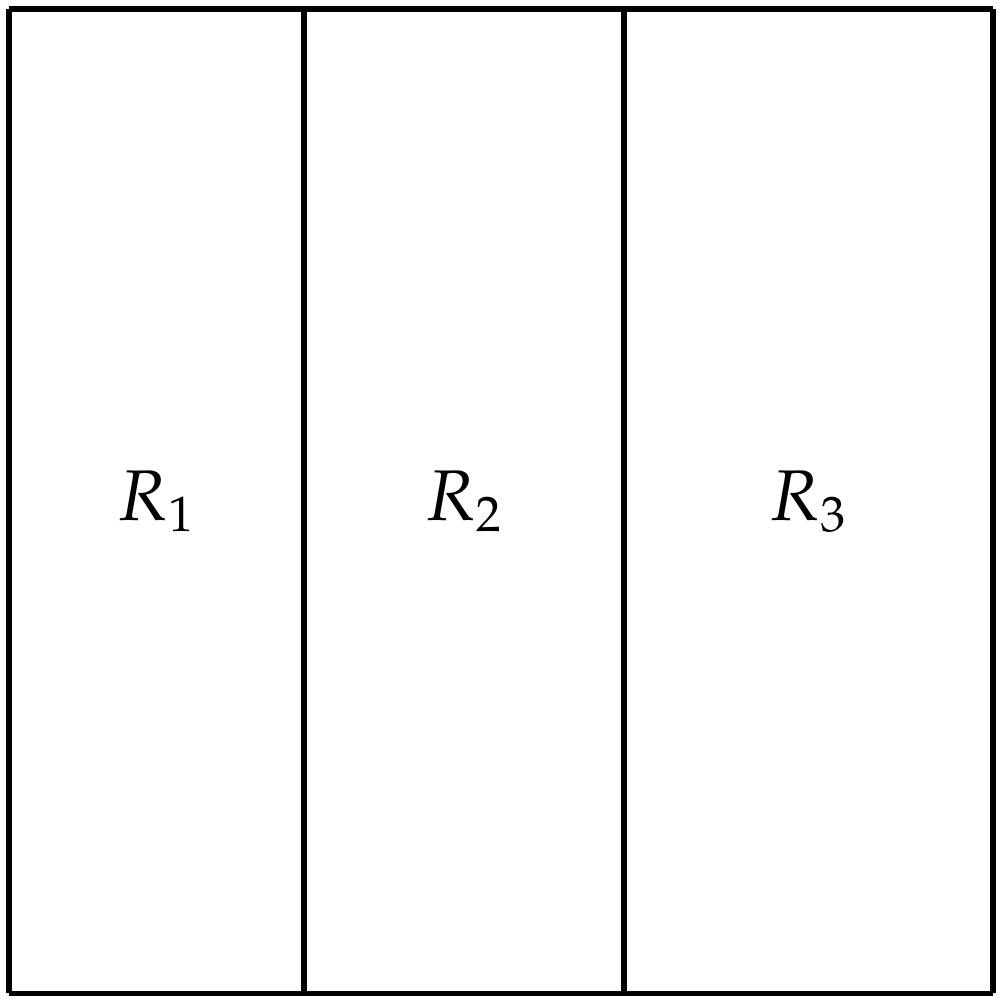} & \includegraphics[width=.3\textwidth]{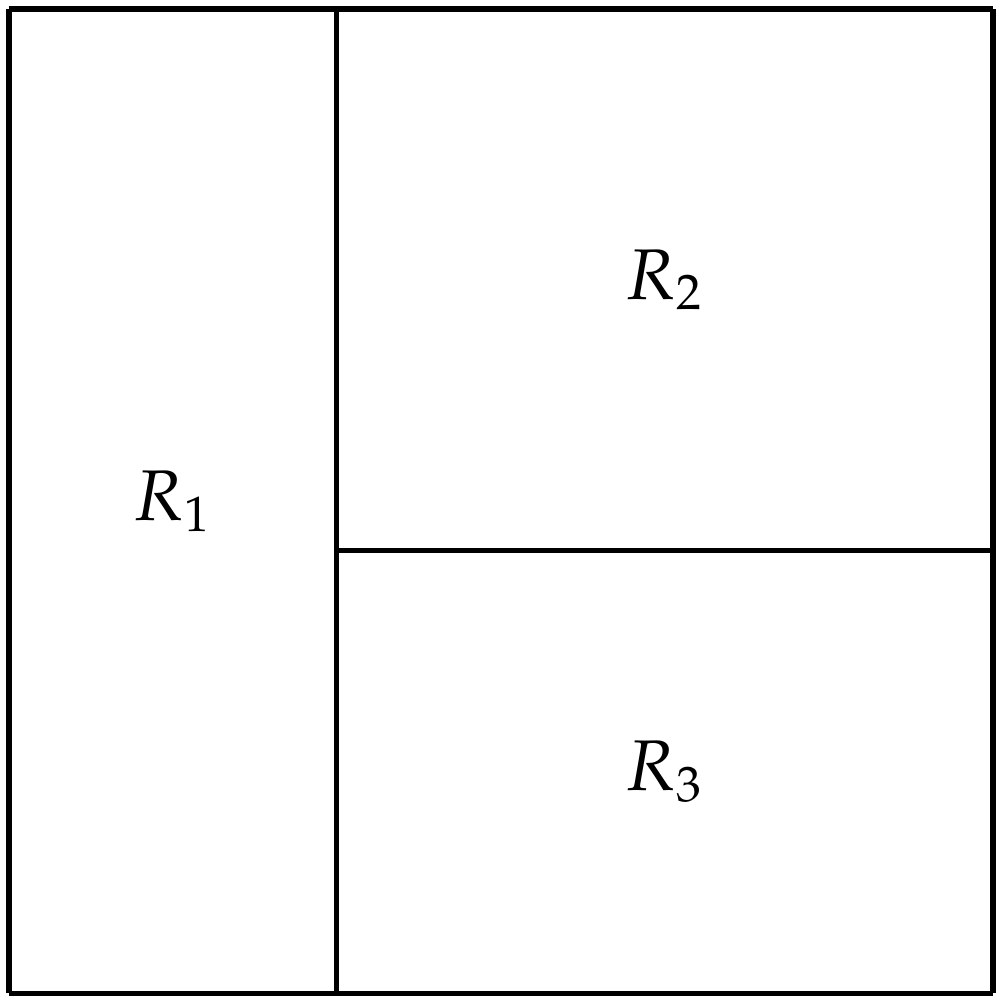} \\
$(a)$ & $(b)$ & $(c)$
\end{tabular}
\vskip.25cm
\caption{Different square partitions using non-congruent rectangles.
}\label{fig:square23}
\end{figure}
The calculation of $d_3(n)$ needs a deeper analysis. We perform it in the following result.
\begin{proposition} Given any positive integer $n \geq 3$, 
\begin{equation}\label{eq:d3n}
 d_3(n)=
\left\{
\begin{array}{ccc}
n+\frac{1}{3}n & \mbox{\emph{ if }} & n \equiv 0 \pmod{6}, \\[0.1cm]
n-\frac{1}{6}(n-1) & \mbox{\emph{ if }} & n \equiv 1 \pmod{6}, \\[0.1cm]
n+\frac{1}{3}(n-2) & \mbox{\emph{ if }} & n \equiv 2 \pmod{6}, \\[0.1cm]
n-\frac{1}{3}n & \mbox{\emph{ if }} & n \equiv 3 \pmod{6}, \\[0.1cm]
n+\frac{1}{3}(n+2) & \mbox{\emph{ if }} & n \equiv 4 \pmod{6}, \\[0.1cm]
n-\frac{1}{6}(n+1) & \mbox{\emph{ if }}& n \equiv 5 \pmod{6}.
\end{array}
\right.
\end{equation}
\end{proposition}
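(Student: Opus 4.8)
The plan is to classify, up to symmetry, the partitions of the $n\times n$ square into exactly three rectangles and then minimize the defect over each family. A routine corner-chasing argument (follow the rectangle containing a fixed corner of the square; the rest of the dissection is then forced) shows that there are exactly two types: \emph{Type A}, three parallel strips $n\times x$, $n\times y$, $n\times z$ with $x+y+z=n$; and \emph{Type B}, one full-length strip $\ell\times n$ together with two stacked rectangles $(n-\ell)\times m$ and $(n-\ell)\times(n-m)$, where $1\le\ell,m\le n-1$. I would first unpack the non-congruence hypothesis: for Type A it merely forces $x,y,z$ to be pairwise distinct; for Type B, since the full-length strip has $n$ as its longer side while the other two rectangles have both sides smaller than $n$, it is automatically incongruent to them, so the only remaining condition is $m\ne n-m$, i.e.\ $m\ne n/2$, which is a genuine restriction only for even $n$. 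Type A can then be dismissed at once: writing $x>y>z\ge 1$ we get $x-z\ge 2$, so every Type-A partition has defect $n(x-z)\ge 2n$, which strictly exceeds the value of $d_3(n)$ asserted in \eqref{eq:d3n} in every residue class (and for $n\in\{3,4,5\}$ no Type-A partition exists at all); so Type A is never optimal and the problem reduces to Type B.

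For Type B the three areas are $a_1=\ell n$, $a_2=(n-\ell)m$ and $a_3=(n-\ell)(n-m)$, with $a_2+a_3=(n-\ell)n$ and $a_1+a_2+a_3=n^2$. The upper bounds follow from explicit constructions with $\ell$ near $n/3$ and $m$ near $n/2$ (subject to $m\ne n/2$); e.g.\ for $n\equiv 0\pmod 6$ the choice $\ell=n/3$, $m=n/2+1$ yields areas $\tfrac{n^2}{3}$ and $\tfrac{n^2}{3}\pm\tfrac{2n}{3}$, hence defect $\tfrac{4n}{3}$, and the other five cases are analogous. For the lower bound I would argue in two stages. Stage one: from $\max(a_2,a_3)\ge\tfrac12(n-\ell)n\ge\min(a_2,a_3)$ one obtains that the defect is at least $\bigl|a_1-\tfrac12(n-\ell)n\bigr|=\tfrac n2\,|3\ell-n|$, and this already exceeds the claimed value unless $\ell$ equals $n/3$ (when $3\mid n$) or one of $\lfloor n/3\rfloor,\lceil n/3\rceil$ (otherwise), leaving at most two candidates for $\ell$. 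Stage two: for each surviving $\ell$, set $w=n-\ell$ and assume (by the $R_2\leftrightarrow R_3$ symmetry) that $m\ge\lceil n/2\rceil$, so $a_2\ge a_3$ and $a_2-a_3=w(2m-n)$ is increasing in $m$; the defect then equals $a_2-a_3$, $a_1-a_3$, or $a_2-a_1$ according to whether $a_1$ lies between, below, or above $a_2$ and $a_3$. Minimizing over the admissible $m$ (the least admissible value being $n/2+1$ when $n$ is even, since $m=n/2$ is forbidden) for each of the at most two candidate $\ell$'s and keeping the smaller outcome reproduces, after a short computation in each residue class, the six formulas of \eqref{eq:d3n}.

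The crux is stage two, and in particular showing that the ``losing'' value of $\ell$ really does lose: the crude bound of stage one is not sharp enough there, and one has to add the observation that the prohibition $m\ne n/2$ inflates that candidate's best achievable defect above its naive lower bound, so every residue class genuinely requires carrying both candidate values of $\ell$ through the analysis together with the best admissible $m$. Finally, the splitting of the answer into exactly six cases is transparent from this viewpoint: a defect-zero Type-B configuration would require $\ell=n/3$ and $m=n/2$ to be integers simultaneously, i.e.\ $6\mid n$, and even then it is barred by the incongruence of the two stacked rectangles; the entries of \eqref{eq:d3n} are precisely the costs of the cheapest integral and incongruence-feasible perturbation of this ideal configuration.
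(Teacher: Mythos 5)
Your plan is correct and would deliver the stated formulas, but it organizes the two-parameter minimization in the opposite order from the paper, so it is worth comparing. Both arguments use the same classification (three parallel strips versus the T-shaped type) and both end in a mod-6 case check, but the paper first fixes $(b_2,b_3)$ at the most balanced admissible split of $n$ (arguing this minimizes $A(R_2)-A(R_3)$) and then finds the admissible strip widths $a_1$ by solving the interlacing inequalities $A(R_3)\le A(R_1)\le A(R_2)$, with a separate patch for $n\equiv 1,5\pmod 6$ where interlacing is impossible; you instead pin the strip width first, via the averaging bound $\mbox{defect}\ge\frac n2\,|3\ell-n|$ coming from $a_2+a_3=(n-\ell)n$, which rigorously cuts the candidates to $\ell\in\{\lfloor n/3\rfloor,\lceil n/3\rceil\}$, and only then optimize $m$. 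This buys a cleaner justification of a step the paper treats informally (``the area of $R_1$ must interlace''), and your $2n$ lower bound for the three-strip type is in fact the sharp one (widths $z,z+1,z+2$ when $3\mid n$), tighter than the $3n$ quoted in the paper. Two small things you should still write down: in stage two, monotonicity of $a_2-a_3$ alone is not quite enough --- you need that $\max(a_1,a_2)-\min(a_1,a_3)$ is non-decreasing in $m$ for $m\ge n/2$, which holds since $a_2$ increases and $a_3$ decreases, and which then shows the optimal $m$ is the least admissible one, i.e.\ exactly the paper's balanced split; and the six residue-class evaluations plus the five omitted constructions that you call ``a short computation'' are precisely where the paper spends most of its proof --- they are routine and do come out right (e.g.\ for $n=6l+2$ both candidates $\ell=2l$ and $\ell=2l+1$ must be carried through, the latter winning with defect $8l+2$), so nothing in your outline would fail, but the case work cannot be skipped in a complete write-up.
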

\begin{proof}
There are two types of square partitions using three rectangles (up to rotations and/or symmetries). They are depicted in Figure \ref{fig:square23} $(b)$ and $(c)$. The partition depicted in case $(b)$ can be analyzed using a similar argument to the one given above for two rectangles. For this type of partition, the defect is $3n$ when $n \equiv 0 \pmod{3}$. Otherwise, the defect is even larger. However, we focus on partitions of  type $(c)$ since it gives a lower defect than a partition of type $(b)$ for any $n$, and as a consequence, it produces the values of $d_3(n)$. Let $(a_i,b_i)$ be the dimensions of $R_i$, for $i=1,2,3$. Then, $b_1=b_2+b_3=n$ and $a_1+a_2=a_1+a_3=n$. Since $R_2 \not \cong R_3$, without loss of generality, we can choose $b_2 > b_3$. In this case, we proceed as we did with the case $k=2$. The dimensions $b_2$ and $b_3$ to get the minimum value of $A(R_2)-A(R_3)$ (the difference of the areas of $R_2$ and $R_3$) are given by
\begin{equation}\label{b2b3}
(b_2,b_3)=
\left\{
\begin{array}{ccc}
(\frac{n}{2}+1,\frac{n}{2}-1) & \textrm{ if } & n \equiv 0 \pmod{2}, \\
(\frac{n+1}{2}, \frac{n-1}{2}) & \textrm{ if } & n \equiv 1 \pmod{2}.
\end{array}
\right.
\end{equation}
Notice that the area of $R_1$ must interlace $A(R_2)$ and $A(R_3)$ to get $d_3(n)$ since, otherwise, the defect of the square would be larger. Moreover, since
\begin{equation}\label{ar2ar3}
A(R_2)-A(R_3)=
 \left\{
\begin{array}{ccc}
2(n-a_1) & \textrm{ if } & n \equiv 0 \pmod{2}, \\
n-a_1 & \textrm{ if } & n \equiv 1 \pmod{2},
\end{array}
\right.
\end{equation}
we look for the largest $a_1$ satisfying $A(R_3) \leq A(R_1) \leq A(R_2)$. We divide this problem depending on the parity of $n$:
\begin{itemize}
 \item [(a)] If $n \equiv 0 \pmod{2}$: From $A(R_1) \leq A(R_2)$, $A(R_1) \geq A(R_3)$ and Equation \eqref{b2b3}, we obtain
 \begin{equation}\label{boundsa1}
 \left\{
 \begin{array}{c}
  a_1(\frac{3n}{2}+1) \leq n(\frac{n}{2}+1), \\[.1cm]
  a_1(\frac{3n}{2}-1) \geq n(\frac{n}{2}-1).
\end{array}
\right.
  \end{equation}
To get the largest integer value of $a_1$ satisfying Equation \eqref{boundsa1}, we write $n=6l+p$, where $p \in \{0,2,4\}$. Replacing it in \eqref{boundsa1}, we obtain
\[
\left\{
 \begin{array}{c}
a_1(9l+\frac{3p}{2}+1) \leq 18l^2+(6p+6)l+p(\frac{p}{2}+1), \\[.1cm]
a_1(9l+\frac{3p}{2}-1) \geq 18l^2+(6p-6)l+p(\frac{p}{2}-1).
\end{array}
\right.
\]
It is easy to see that the unique integer value satisfying both inequalities is $a_1=2l$ for $p=0$, and $a_1=2l+1$ for $p=2$ or $p=4$. However, since $A(R_2)-A(R_3)=2(n-a_1)$ (see Equation \eqref{ar2ar3}), we compute $d_3(n)$ using the unique value for $a_1$ obtained as a solution of \eqref{boundsa1}. Hence, for $p=0$, we get $d_3(n)=2(6l-2l)=6l+2l=n+\frac{1}{3}n$. Besides, $d_3(n)=2(2l+2-(2l+1))=n+\frac{1}{3}(n-2)$ for $p=2$ and, for the last case $p=4$, we obtain $d_3(n)=n+\frac{1}{3}(n+2)$.
\item [(b)] If $n \equiv 1 \pmod{2}$: From $A(R_1) \leq A(R_2)$, $A(R_1) \geq A(R_3)$ and Equation \eqref{b2b3}, we get
\begin{equation}\label{2boundsa1}
 \left\{
 \begin{array}{c}
  a_1(3n+1) \leq n(n+1), \\
a_1(3n-1) \geq n(n-1).
\end{array}
\right.
  \end{equation}
Again, we write $n$ as $6l+p$, where $p \in \{1,3,5\}$. Replacing it in \eqref{2boundsa1}, we get
\[
\left\{
 \begin{array}{c}
a_1(18l+3p+1) \leq 36l^2+(12p+6)l+p(p+1), \\
a_1(18l+3p-1) \geq 36l^2+(12p-6)l+p(p-1).
\end{array}
\right.
\]
As in the previous case, the unique value for $a_1$ satisfying Equation \eqref{2boundsa1} is $a_1=2l+1$ for $p=3$. Taking into account that $d_3(n)=A(R_3)-A(R_1)=(n-a_1)$ for this case, we obtain the corresponding value for $d_3(n)$ given in Equation \eqref{eq:d3n}. The cases $p=1$ and $p=5$ have no solution for $a_1$ satisfying both inequalities and they need a different analysis.
\begin{itemize}
 \item Case $p=1$: From $a_1(18l+4) \leq 36l^2+18l+2$, we get $a_1 \leq 2l$. But $a_1=2l$ does not satisfy $a_1(18l+2) \geq 36l^2+6l$. This means that $A(R_1)$ cannot interlace $A(R_2)$ and $A(R_3)$. In fact, $a_1=2l$ is the largest integer value satisfying $A(R_1) \leq A(R_3) \leq A(R_2)$. In this case, the defect is $A(R_2)-A(R_1)=5l+1$. The other possibility is taking $a_1=2l+1$, where  $A(R_3) \leq A(R_2) \leq A(R_1)$, but in this case the defect is $A(R_1)-A(R_3)=8l+1$. As a consequence, $a_1=2l$ and $d_3(n)=n-\frac{1}{6}(n-1)$.
 \item Case $p=5$: From $a_1(18l+16) \leq 36l^2+66l+30$, we get $a_1 \leq 2l+1$. But $a_1=2l+1$ does not satisfy $a_1(18l+14) \geq 36l^2+54l+10$. Moreover, $a_1=2l+1$ is the largest value such that $A(R_1) \leq A(R_3) \leq A(R_2)$ and, for this case, we get a defect of $8l+7$. Nevertheless, taking $a_1=2l+2$, we get $A(R_3) \leq A(R_2) \leq A(R_1)$, and the defect becomes $5l+4$. Hence, $a_1=2l+2$, and $d_3(n)$ becomes $n-\frac{1}{6}(n+1)$.
\end{itemize}

\end{itemize}

\end{proof}

We would like to remark that the previous proof is constructive, that is, it provides, for any $n$, the optimal partitions of a square of side length $n$ using $3$ rectangles. Figure \ref{fig:optsquare} shows these optimal partitions for odd $n$.

\begin{figure}[htb]
\centering
\includegraphics[width=\textwidth]{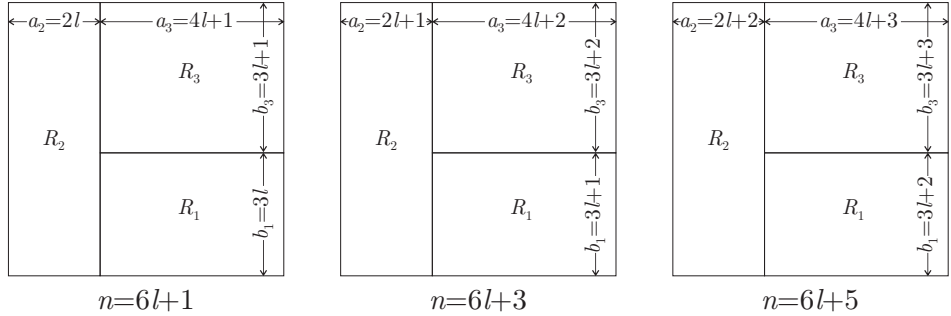}
\vskip-5.5cm
\caption{Optimal partitions of a square of side $n$ using $3$ rectangles, for odd $n$.}\label{fig:optsquare}
\end{figure}

A simple comparison between Equations \eqref{eq:d2n} and \eqref{eq:d3n} shows that $d_3(n) < d_2(n)$, for any $n$ (see also Figure \ref{fig:plotdn}), that is, any optimal partition using three rectangles provides a smaller defect than any optimal partition with two squares. This is something that one can expect. But, on the other side, this monotonicity property does not continue beyond this point. Optimal partitions using just three rectangles appear for some values of $n$, namely $n=3$, $5$,  and $9$. We will prove this result with the help of the next lemma.

\begin{figure}[htp]
\centering
\includegraphics[scale=0.8]{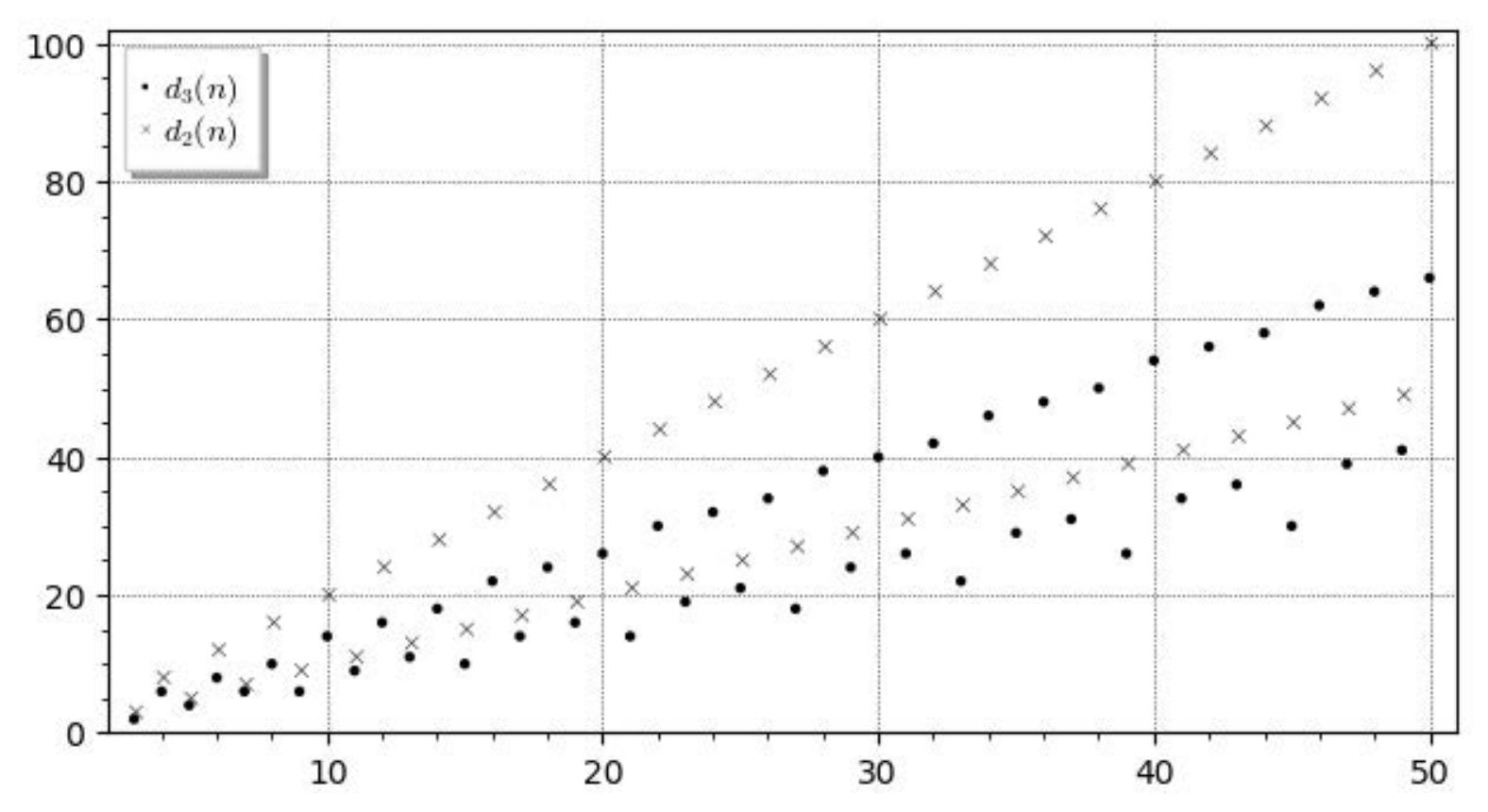}
\caption{Values of $d_2(n)$ and $d_3(n)$ for $n \leq 50$.}\label{fig:plotdn}
\end{figure}

\begin{lemma}\label{lem:bounds}
 Let us consider a partition of a square of side length $n$, for $n \geq 3$, with defect $\delta$ using $k$ rectangles. Let $m$ be the minimum area of such $k$ rectangles. Then, it holds that
 \begin{equation}\label{eq:boundm}
 \left\lceil \frac{n^2-(k-1)\delta}{k} \right\rceil \leq m \leq \left\lfloor \frac{n^2-\delta}{k} \right\rfloor.
  \end{equation}
\end{lemma}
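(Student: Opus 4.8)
The plan is to bound the minimum area $m$ by a counting argument on the total area $n^2$, which is the sum of the $k$ rectangle areas. Let $A_1 \leq A_2 \leq \cdots \leq A_k$ be the areas of the $k$ rectangles, so that $m = A_1$ and $A_k = A_1 + \delta$ (or at least $A_k \leq A_1 + \delta$, which is all we need for the relevant inequality), and $\sum_{i=1}^k A_i = n^2$. The two inequalities in \eqref{eq:boundm} will come from squeezing this sum: each $A_i$ lies in the interval $[m, m+\delta]$, so $km \leq n^2 \leq km + (k-1)\delta$, using $A_1 = m$ on the lower side of the second chain and $A_i \leq m+\delta$ for the $k-1$ rectangles other than the smallest. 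Solving $n^2 \leq km + (k-1)\delta$ for $m$ gives $m \geq (n^2-(k-1)\delta)/k$, and solving $n^2 \geq km$... — wait, that only gives $m \leq n^2/k$, not the stated upper bound with the $-\delta$.

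To get the sharper upper bound $m \leq \lfloor (n^2-\delta)/k \rfloor$, I would instead use the largest rectangle: there is at least one rectangle of area $A_k \geq m + \delta$ (if $\delta > 0$; if $\delta = 0$ all areas are equal and the bound is immediate since then $m = n^2/k$ must be an integer). Then $n^2 = \sum_{i=1}^k A_i \geq (k-1)m + (m+\delta) = km + \delta$, hence $m \leq (n^2-\delta)/k$. Since $m$ is an integer, $m \leq \lfloor (n^2-\delta)/k\rfloor$. Symmetrically, for the lower bound, $n^2 = \sum A_i \leq A_1 + (k-1)(m+\delta) = km + (k-1)\delta$ — but here I must be careful: this uses $A_1 = m$, which holds by definition of $m$ as the minimum. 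Wait, I wrote $A_1$ where I meant $m$; with $A_1 = m$ this reads $n^2 \le m + (k-1)(m+\delta) = km + (k-1)\delta$, so $m \geq (n^2-(k-1)\delta)/k$, and integrality gives $m \geq \lceil (n^2-(k-1)\delta)/k\rceil$.

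So the structure is: (i) record $\sum_i A_i = n^2$ and $m \leq A_i \leq m+\delta$ for all $i$; (ii) for the upper bound, isolate one rectangle attaining (or exceeding) the maximum and bound the other $k-1$ below by $m$; (iii) for the lower bound, isolate the minimum rectangle (area exactly $m$) and bound the other $k-1$ above by $m+\delta$; (iv) apply floor/ceiling using integrality of $m$. I do not expect a genuine obstacle here — the only subtlety is the edge case $\delta = 0$, where one checks the formula degenerates correctly to $m = n^2/k$, and making sure the defect is realized by genuinely distinct rectangles (which the hypothesis of a valid Mondrian partition guarantees) so that the "largest minus smallest" interpretation of $\delta$ is consistent. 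The main thing to state carefully is that $\delta$ is by definition $A_k - A_1 = A_k - m$, so $A_k = m + \delta$ exactly, which makes both chains of inequalities tight at the extremes and the argument clean.
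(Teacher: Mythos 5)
Your proposal is correct and follows essentially the same argument as the paper: identify one rectangle of area exactly $m$ and one of area exactly $m+\delta$, bound the remaining $k-1$ areas between $m$ and $m+\delta$, compare the resulting two-sided estimate with $\sum_i A(R_i)=n^2$, and finish with integrality of $m$ to get the floor and ceiling. The brief detour about the weaker bound $m\le n^2/k$ and the $\delta=0$ edge case does not change the substance; the final chains of inequalities match the paper's proof.
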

\begin{proof}
Let $A(R_i)$ be the area of the rectangles of the partition, where $1 \leq i \leq k$.
There is at least one rectangle with area $m$, let us say $R_1$, and since the defect is $\delta$, there is at least one other rectangle, let us say $R_k$, with area $m+\delta$. The remaining rectangles $R_i$, for $1 <i<k$, have area $m \leq A(R_i) \leq m+\delta$. This means that
\[
(k-1)m+(m+\delta) \leq \sum_{i=1}^k A(R_i) \leq m + (k-1)(m+\delta).
\]
Taking into account that $\sum_{i=1}^k A(R_i)=n^2$, we get $\delta \leq n^2-km \leq (k-1)\delta$. A simple rearrangement of the previous formula gives the desired result.
\end{proof}

\begin{proposition}
There are optimal partitions of a square of side length $n$ using exactly three rectangles for $n=3,5$ and $9$, that is, $d(n)=d_3(n)$ for $n=3,5$, and $9$.
\end{proposition}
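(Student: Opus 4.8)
The plan is to combine the identity $d(n)=\min_k\{d_k(n)\}$ with Lemma~\ref{lem:bounds}. Since $d(n)\le d_3(n)$ is automatic, it is enough to show, for $n\in\{3,5,9\}$, that no partition into $k\ne 3$ non-congruent rectangles has defect smaller than $d_3(n)$. For $k=2$ this is exactly the strict inequality $d_2(n)>d_3(n)$ noted above, so I would assume $k\ge 4$ and argue by contradiction: suppose the $n\times n$ square is partitioned into $k$ non-congruent rectangles $R_1,\dots,R_k$ with defect $\delta\le d_3(n)-1$, and write $m=\min_i A(R_i)$. All the areas then lie in the block of at most $d_3(n)$ consecutive integers $\{m,m+1,\dots,m+\delta\}$, and Lemma~\ref{lem:bounds} yields $km\le n^2-\delta\le n^2$, hence $m\le n^2/4$; only finitely many values of $m$ remain.

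For each admissible $m$, let $\mathcal{R}_m$ be the (finite) set of pairwise non-congruent rectangles with both side lengths at most $n$ and area in $\{m,\dots,m+d_3(n)-1\}$. Since the $R_i$ are non-congruent, they form a sub-family of $\mathcal{R}_m$ whose areas add up to $n^2$, so a first necessary condition is $\sum_{R\in\mathcal{R}_m}A(R)\ge n^2$, and when that holds one must moreover be able to pick a sub-family of $\mathcal{R}_m$ with area-sum exactly $n^2$. For $n=3$ and $n=5$ a direct inspection of the few admissible $m$ shows $\sum_{R\in\mathcal{R}_m}A(R)<n^2$ in every case, a contradiction; hence $d(3)=d_3(3)$ and $d(5)=d_3(5)$. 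For $n=9$, where $d_3(9)=6$ so $\delta\le 5$ and the relevant blocks contain six integers, the same inspection narrows everything down to essentially one surviving possibility: $k=5$ with areas $\{14,15,16,18,18\}$, realised by the rectangles $2\times 7$, $3\times 5$, $3\times 6$, $2\times 9$ and one of $4\times 4$ or $2\times 8$.

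The hard part will be to show that this last configuration cannot tile the $9\times 9$ square; I expect this geometric (as opposed to arithmetic) step to be the only real obstacle. My plan for it: since no piece other than $2\times 9$ has a side of length $9$, that piece must occupy two full rows (or columns), leaving a $7\times 9$ rectangle to be tiled by $\{2\times 7,\ 3\times 5,\ 3\times 6,\ X\}$ with $X\in\{4\times 4,\ 2\times 8\}$; a short case analysis on the corner piece of this rectangle — using that $2\times 8$ cannot fit inside a $7\times 7$ sub-rectangle and that an L-shaped region decomposes into two rectangles in only two ways — rules out every placement. Combined with the previous step, this gives $d(9)=d_3(9)$ and finishes the proof.
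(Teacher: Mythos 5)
Your arithmetic reduction is correct and follows the same strategy as the paper: Lemma~\ref{lem:bounds} bounds the minimum area $m$, the admissible areas form a short block, and counting/subset-sum arguments over the finite tile inventory eliminate almost everything. Indeed your bookkeeping is a little cleaner than the paper's: since you only target the strict inequality $\delta\le d_3(n)-1$, the case $n=5$ closes by total-area counting alone (the paper, proving the stronger non-strict statement, needs subset sums for $m=4,5,6$), and for $n=9$ pure subset sums already dispose of $m=12,15,16,20$ (where the paper invokes geometric remarks about $2\times 9$ and $2\times 8$), leaving exactly the family you name: $2\times7$, $3\times5$, $3\times6$, $2\times9$ together with one of $4\times4$ or $2\times8$.

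The gap is the geometric endgame for $n=9$, which you yourself flag as the crux but leave as a plan. Two concrete problems. First, the assertion that the $2\times9$ ``must occupy two full rows (or columns), leaving a $7\times9$ rectangle'' is unjustified: a priori the $2\times9$ may lie strictly inside the square, leaving \emph{two} full-width strips rather than one $7\times 9$ rectangle. This must be excluded first; it can be, e.g., each strip of height $h$ has area $9h$, which must be a subset sum of $\{14,15,16,18\}$, and the only such multiples of $9$ are $18$ and $45$, so one strip would be a $2\times9$ region tiled by the single $3\times6$ --- impossible; hence the $2\times9$ hugs a side. Second, the ``short case analysis on the corner piece'' of the $7\times9$ is not actually carried out, and the ingredients you list do not by themselves finish it. A complete argument runs roughly: only the $2\times7$ has a side $7$ and no remaining tile has a side $9$; if the $2\times7$ stands vertically it must hug a side of the $7\times9$ (the same strip/divisibility argument with multiples of $7$), leaving a $7\times7$ square whose four corners cannot be covered by the three remaining tiles, none of which has a side $7$ (and $2\times8$ does not even fit); if the $2\times7$ lies horizontally, no tile covers two corners of the $7\times9$, so all four tiles are corner tiles, and the $9$-edge containing the $2\times7$ would need a second tile of width $2$ and height at most $7$, which none of $3\times5$, $3\times6$, $4\times4$, $2\times8$ supplies. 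Until something of this kind is written down, the only nontrivial case $n=9$ is not proved; the paper handles this by dispersing its geometric reasoning over the subcases $m=14,15,16$ of its inventory instead of isolating one final configuration as you do.
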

\begin{proof}
Let us suppose that we have a partition of a square of side length $n$ using $k$ rectangles, where $k\geq 4$, and such that $d_k(n)< d_3(n)$, for $n=3,5$, and $9$. We will derive a contradiction. Moreover, for $n=3$ and $5$, we will prove that even $d_k(n) \leq d_3(n)$ is also impossible, showing that optimal partitions for $n=3$ and $5$ only happen with $3$ rectangles.
\begin{itemize}
 \item Case $n=3$. This case is trivial since there is no partition using $k \geq 4$ non-congruent rectangles. 

 \item
 Case $n=5$. Let $m$ be the minimum area of a partition using $k$ rectangles (a $k$-partition for short), with $k \geq 4$. By Lemma \ref{lem:bounds}, we have that $m \leq \lfloor \frac{25-d_k(5)}{k} \rfloor \leq \lfloor \frac{25}{4} \rfloor = 6$. This means that a $k$-partition contains, as a tile of minimum area, one of these tiles: $1 \times 1$, $1 \times 2$, $1 \times 3$, $1 \times 4$, $2 \times 2$, $1 \times 5$, or $2 \times 3$. We derive a contradiction in the case $m \leq 3$. Indeed, the maximum area of a tile would be at most $m+d_k(5)\leq m+d_3(5)=m+4$ (see Equation \eqref{eq:d3n}). If we considered all the available tiles with areas between these values, we would not sum up to $5^2=25$. If $m=4$, the available tiles for this $k$-partition are: $1 \times 4$, $2 \times 2$, $1 \times 5$, $2 \times 3$, and $2 \times 4$ (since the maximum allowed area is $8$). No $k(\geq 4)$ combination of these tiles sums up to $25$. If $m=5$, the tile of dimensions $1 \times 5$ belongs to the $k$-partition. But since the maximum area of a tile would be $9$, in this case, the available tiles would be $1 \times 5$, $2 \times 3$, $2 \times 4$, and $3 \times 3$. Since $k \geq 4$, all of them would perform the partition, but their corresponding areas sum up to $28$. Finally, if $m=6$, there are just four available tiles with dimensions $2 \times 3$, $2 \times 4$, $3 \times 3$, and $2 \times 5$. Their corresponding areas sum up to more than $25$.

\item
Case $n=9$: Assume that there is a $k$-partition ($k \geq 4$) with defect $d_k(9) < d_3(9)=6$. By Lemma \ref{lem:bounds}, we have that a tile of minimum area $m$ satisfies $m \leq \lfloor \frac{81}{4} \rfloor \leq 20$. There are no tiles of areas $m=11,13,17,19$ fitting in the $9 \times 9$-square. For any $m < 12$ or $m=18$, it is easy to show that the sum of the areas of all available tiles (those with areas between $m$ and $m+5$) do not reach $9^2$. The remaining values of $m$ need a specific argument:
\begin{itemize}
\item
$m=12$: The available tiles are $2 \times 6$, $3 \times 4$, $2 \times 7, 3 \times 5, 2 \times 8$, and $4 \times 4$. Their areas sum up to $85$, and the minimum area of a tile is $12$. So, there is no possibility that $k$ of them, for $k \geq 4$, give a partition of a $9 \times 9$-square.
\item
$m=14$: The available tiles are $2 \times 7, 3 \times 5, 2 \times 8, 4 \times 4, 2 \times 9$, and $3 \times 6$. Their areas sum up to $97$. But notice that the tile $2 \times 9$ needs a tile of dimensions $1 \times s$, for some $s \geq 1$, to complete a perfect partition. Since there is no tile of dimensions $1 \times s$, we must remove the tile $2 \times 9$ from the set of available tiles. But, then, the areas of the available tiles sum up to less than $81$.
\item
$m=15$ or $m=16$: The unique tiles of dimension $2 \times s$, for some $s \geq 1$, such that $2s \geq m$ are the tiles of dimensions $2 \times 8$ and $2 \times 9$. The tile $2 \times 9$ cannot perform a perfect partition since, again,  there is no tile of dimension $1 \times s'$. So, the tile $2 \times 8$ does not belong to any perfect partition (it needs another tile of dimension $2 \times s''$, and there is no other tile with such dimensions). The areas of the remaining available tiles do not reach $81$ either for $m=15$ or $m=16$.
\item
$m=20$: The available tiles, in this case, are $4 \times 5$, $3 \times 7$, $4 \times 6$, $3 \times 8$, and $5 \times 5$. Their areas sum up to $117$, and no combination of four of them gives a total area of $81$.
\end{itemize}
\end{itemize}
\end{proof}

\section{A general construction}\label{sec:gen}
Instead of considering the `absolute defect' $d(n)$, which is a difference between areas, it makes sense also to study the `relative defect' or {\em defect ratio}, defined as $\rho(n)=d(n)/n^2$. Here we show that, for $n$ large enough, there exist partitions of the square of side length $n$ with $\rho(n)<\epsilon$ for any $\epsilon>0$.

\begin{proposition} For any natural number $r \geq 3$, there exists a partition of the square of side length $n=\frac{k!}{2}$, where $k=2r+1$, using $k$ rectangles such that the defect ratio $\rho(n)\rightarrow 0$ when $r\rightarrow \infty$.
\end{proposition}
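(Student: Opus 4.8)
The plan is to exhibit, for each $r\ge 3$, a very simple ``sliced'' partition of the $n\times n$ square into $k$ horizontal strips of full width $n$ whose heights are pairwise distinct integers as close as possible to $n/k$; this alone already forces the defect ratio to $0$, so no clever geometry is needed.

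First I would record the arithmetic. Since $k=2r+1\ge 7$, the number $k!$ is even and divisible by $k$, so $n=k!/2\in\NN$; moreover $n/k=(k-1)!/2$ is a positive integer because $k-1=2r$ is even. Put $h_i=\tfrac{n}{k}+i-(r+1)$ for $i=1,\dots,k$, i.e.\ let $h_1<\dots<h_k$ be the $k$ consecutive integers $\tfrac{n}{k}-r,\dots,\tfrac{n}{k}+r$. These are pairwise distinct, strictly positive (as $\tfrac nk=(k-1)!/2>r$), and $\sum_{i=1}^k h_i=k\cdot\tfrac nk=n$.

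Next I would partition $[0,n]^2$ into the strips $R_i=[0,n]\times\bigl[\sum_{j<i}h_j,\ \sum_{j\le i}h_j\bigr]$ for $i=1,\dots,k$. Each $R_i$ has dimensions $n\times h_i$ and area $A(R_i)=n\,h_i$. Since $0<h_i<n$, a congruence $R_i\cong R_j$ (even allowing a ninety-degree rotation) would force $h_i=h_j$; as the $h_i$ are distinct, the $k$ rectangles are non-congruent, so this is a legal Mondrian partition with exactly $k$ rectangles. Its defect is $n(h_k-h_1)=2rn=(k-1)n$, whence
\[
\rho(n)=\frac{d(n)}{n^2}\le\frac{(k-1)n}{n^2}=\frac{k-1}{n}=\frac{2(k-1)}{k!}=\frac{4r}{(2r+1)!}\ \xrightarrow[\,r\to\infty\,]{}\ 0 .
\]

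There is no genuinely hard step here: the entire content is the observation that $k$ strips of nearly equal width already shrink the defect ratio to $0$, and the only things to verify are the integrality and positivity of the $h_i$ (which is exactly why $n=k!/2$ with $k$ odd is a convenient choice — one only needs $k\mid n$ together with $n/k>r$) and the non-congruence of the strips. If one wanted a less degenerate, more ``two-dimensional'' partition one could instead slice the same square into $k$ columns of widths $h_i$, or interleave a few horizontal and vertical strips; but none of this is required for the stated estimate, so I would keep the proof in the one-directional form above.
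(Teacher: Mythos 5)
Your construction is correct: the heights $h_i=\frac{n}{k}-r,\dots,\frac{n}{k}+r$ are distinct positive integers (since $\frac{n}{k}=\frac{(k-1)!}{2}>r$ for $k\ge 7$) summing to $n$, the full-width strips $n\times h_i$ are pairwise non-congruent even up to rotation because $h_i<n$, and the resulting defect $(k-1)n$ gives $\rho(n)\le \frac{2(k-1)}{k!}\to 0$, which is all the proposition asks. However, your route is genuinely different from the paper's. The paper builds a ``spiral'' partition in which $R_1$ takes a $1/k$ fraction of the square, $R_2$ a $1/(k-1)$ fraction of the remainder, and so on, so that the first $k-2$ rectangles have exactly equal area $\frac{1}{4}k[(k-1)!]^2$ and only the last two differ; the choice $n=k!/2$ is what makes every side length in that spiral an integer. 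This yields defect $\frac{[(k-1)!!]^2}{2}$ and hence $\rho(n)\le\bigl(\frac{2(k-1)!!}{k!}\bigr)^2$, an enormously stronger quantitative bound than your $\frac{2(k-1)}{k!}$, and it produces near-perfect partitions (all but two areas equal), which is the information the authors care about in connection with $d_k(n)$ and the perfect-Mondrian question of Section 4. Your slicing buys simplicity and makes clear that for the qualitative claim one only needs $k\mid n$ and $n/k>r$ (so the specific value $n=k!/2$ is not really used), but it would not support the paper's sharper estimate nor its later discussion of balanced partitions; as a proof of the proposition exactly as stated, though, it is complete.
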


 \begin{proof}
Given $n=\frac{k!}{2}$, let us consider the rectangles $R_1,R_2,R_3,\ldots,R_k$ inside the square following a `spiral' pattern starting from $R_1$ in the basis of the square and ending in the center of the square (see Figure \ref{integ-decomp(k=7)} as an example for $k=7$). The idea of this proof is as follows: $R_1$ is obtained as a $1/k$ part of the square, $R_2$ as a $1/(k-1)$ part of the rest, $R_3$ as a $1/(k-2)$ part of the rest, and so on.
The dimensions $a_i \times b_i$ of the rectangle $R_i$ are given in Table \ref{tab:spiral},  for all $i=1,2,\dots,k$.
\begin{table}[htb]\label{tab:spiral}
\[
{
\extrarowheight = -0.5ex
\renewcommand{\arraystretch}{2.7}
\begin{array}{|c|c|c|}
\hline\hline
\textrm{Rectangle} & 1\textrm{st side length} & 2\textrm{nd side length} \\
\hline\hline
R_1 & a_1=n & b_1=\dfrac{n^2}{ka_1}=\dfrac{n}{k}\\
\hline
R_2 & a_2=\dfrac{n}{k-1} & b_2=\dfrac{n^2}{ka_2}=\dfrac{n(k-1)}{k}\\
\hline
R_3 & b_3=\dfrac{b_2}{k-2}=\dfrac{n(k-1)}{k(k-2)} & a_3=\dfrac{n^2}{kb_3}=\dfrac{k-2}{n(k-1)} \\
\hline
R_4 & \displaystyle{a_4=\frac{n}{k-3}} & b_4=\dfrac{n^2}{ka_4}=\dfrac{n(k-3)}{k}\\
\hline
R_5 & b_5=\dfrac{b_4}{k-4}=\dfrac{n(k-3)}{k(k-4)} & a_5=\dfrac{n^2}{kb_5}=\dfrac{n(k-4)}{k-3}\\
\hline
\vdots & \vdots & \vdots \\
\hline
R_{k-3} & a_{k-3}=\dfrac{n}{n-2} & b_{k-3}=\dfrac{n^2}{ka_{k-3}}\\
\hline
R_{k-2} & b_{k-2}=\dfrac{n}{n-1} & a_{k-2}=\dfrac{n^2}{kb_{k-2}}\\
\hline
R_{k-1} & a_{k-1}=\dfrac{a_{k-2}-1}{2} & \displaystyle{b_{k-1}=n-\sum_{i=1}^{r}b_{2i-1}} \\
\hline
R_k & a_{k}=\dfrac{a_{k-2}+1}{2}  & \displaystyle{b_{k}=b_{k-1}} \\
\hline\hline
\end{array}
}
\]
\caption{Dimensions of the sequence of rectangles $R_1,\dots,R_k$ following a spiral pattern.}
\end{table}

First, notice that every rectangle dimension is indeed a natural number. Moreover, the first $k-2$ rectangles have the same area, that is,
\[
A(R_1)=\dots=A(R_{k-2})=\dfrac{1}{4}k\left[(k-1)!\right]^2,
\]
and the last two rectangles have area
\[
\begin{array}{ll}
A(R_{k-1})=\dfrac{1}{4}\left( k\left[(k-1)!\right]^2+\left[(k-1)!!\right]^2 \right) & \textrm{and} \\
A(R_{k})=\dfrac{1}{4}\left( k\left[(k-1)!\right]^2-\left[(k-1)!!\right]^2 \right), &
\end{array}
\]
where $(k-1)!!=(k-1)(k-3)(k-5)\cdots 2$ (note that $k$ is odd). Hence, for this partition we have defect  $A(R_{k-1})-A(R_{k})=\frac{[(k-1)!!]^2}{2}$.
As a consequence $d_k(n) = \frac{[(k-1)!!]^2}{2}$ for $n=\frac{k!}{2}$. Now,

$$\rho(n) = \frac{d(n)}{n^2} \leq \frac{d_k(n)}{n^2} \leq \left(\frac{2(k-1)!!}{k!} \right)^2= \left( \frac{2r!!}{(2r+1)!} \right)^2 \rightarrow 0 \ \textrm{ when } \ r\rightarrow \infty.
$$

\end{proof}

\begin{figure}[h!]
\centering
\includegraphics[scale=0.6]{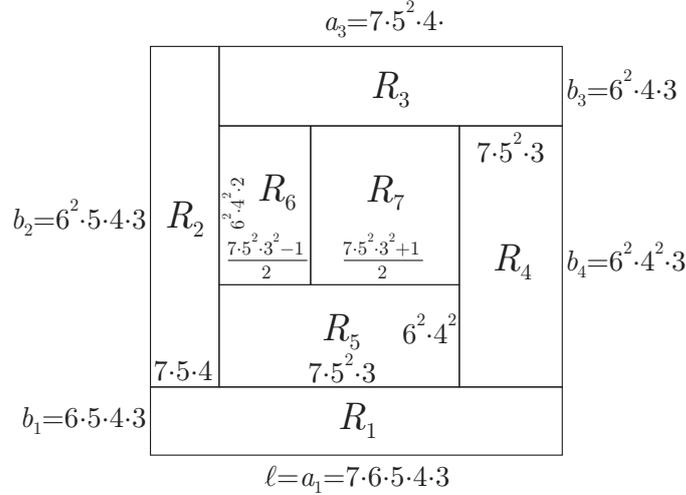}
\vskip-10cm
\caption{A `balanced' Mondrian  integer partition of a square in $k=7$ rectangles. The areas are not proportional to their values, as they should be, to make clearer the drawing.}
\label{integ-decomp(k=7)}
\end{figure}


\begin{table}[h!]\label{t:so-integer}
\centering
\begin{tabular}{|l|l|}
\hline\hline
$a_1 =2520$ & $b_1 =360 $\\
\hline
$a_2 = 420$ & $b_2 = 2160$\\
\hline
$a_3 = 2100 $ & $b_3 = 432$\\
\hline
$a_4 =525$ & $b_4 = 1728$\\
\hline
$a_5 = 1575$ & $b_5 =  576$\\
\hline
$a_6 = 787 $&$ b_6 = 1152 $\\
\hline
$a_7 =788 $ & $b_7 = 1152$\\
\hline\hline
\end{tabular}
\caption{Integral dimensions of the $k=7$ rectangles in a `balanced'  Mondrian partition.}
\end{table}

\section{On perfect Mondrian partitions}\label{sec:perfect}
Here we investigate the possible existence of perfect Mondrian partitions. In this context, O'Kuhn \cite{okuhn2018mondrian} proved that, given $x>3$, the number of side lengths $n(x)\le x$  of squares not having such a partition, that is, $n(x)=|\{n\le x:d(n)\neq 0\}|$,  satisfies a lower bound of the order
\begin{equation}
\label{O'Kuhn}
n(x)\ge C\frac{x \log(\log(x))}{\log(x)},
\end{equation}
where $C\approx\frac{1}{e^{2\gamma}\log(2)}$, and $\gamma=0.577215665\ldots$ is the constant of Euler-Mascheroni.\\
\\
We prove another similar bound by giving  some necessary conditions for the existence of the Mondrian partitions with zero defect.
Although our techniques are very simple, the obtained bound overcomes O'Kuhn's one, as we will see later.
\begin{theorem}
\label{theo-n(x)}
Let ${\cal N}(s_1,\ldots,s_m)$ be the set of integers
of the form $n=p_1^{s_1}\cdots p_m^{s_m}$, where $p_1<\cdots <p_m$ are some primes with fixed powers $s_1,\ldots,s_m$.
\begin{itemize}
\item[$(i)$]
For any given $s_1,\ldots, s_m$, there is a constant $c$ such that, if $p_1>c$, then no square $S$ with side length $n\in {\cal N}(s_1,\ldots,s_m)$ has a perfect Mondrian partition.
\item[$(ii)$]
There is no perfect Mondrian partition of a square $S$ with side length $n$ a primer power $(n\in {\cal N}(s_1))$.
\item[$(iii)$]
There is no perfect Mondrian partition of a square $S$ with side length $n$ being  a product of 2, 3, or 4 different primes $(n\in {\cal N}(1,1)\cup {\cal N}(1,1,1))\cup {\cal N}(1,1,1,1))$.
\item[$(iv)$]
There is no perfect Mondrian partition of a square $S$ with side length $n$ being  a product of 2 or 3  primes.
\end{itemize}
\end{theorem}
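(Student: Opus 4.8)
The plan is to derive a single clean necessary condition for a perfect partition from a divisibility/counting argument, and then to specialize it to the arithmetic shapes of $n$ listed in $(i)$–$(iv)$. Suppose a square of side $n$ is dissected into $k$ non-congruent rectangles, all of area $A$, so that $kA=n^2$; thus $A=n^2/k$ and $k\mid n^2$. Each rectangle has dimensions $a_i\times b_i$ with $a_ib_i=A$, i.e.\ each $(a_i,b_i)$ is an ordered factorization of $A$ into two factors, with no two rectangles using the same unordered pair. Hence $k$ is at most the number of divisors of $A$ that do not exceed $\sqrt A$ (a rectangle and its ninety-degree rotation count once), i.e.\ roughly $k\le \tfrac12\tau(A)$, where $\tau$ is the divisor-counting function. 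Combining $A=n^2/k$ with this inequality gives an inequality of the form $k\lesssim \tfrac12\tau(n^2/k)$, which, since $\tau$ grows subpolynomially, forces $k$ to be small relative to $n$; more precisely one gets a bound like $k = n^{o(1)}$. On the other hand, a perfect partition of an $n\times n$ square into $k$ rectangles also needs $k$ to be \emph{large}: the narrowest rectangle touching the left edge has some width $w_1\ge 1$, and one shows (a strip/fault-line argument, or simply that the $k$ equal areas must tile a square with integer sides and all distinct shapes) that $k$ must be at least of order $n$ — indeed a rectangle of area $A=n^2/k$ and integer sides dividing into an $n\times n$ board, all incongruent, cannot be packed if $k$ is too small because the available distinct shapes of the common area $A$ are too few to cover $n^2$. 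Matching the lower bound $k\gtrsim (\text{something growing in }n)$ against the upper bound $k\le\tfrac12\tau(n^2/k)$ yields a contradiction once $n$ is large in the appropriate sense, and this is exactly statement $(i)$: for fixed exponents $s_1,\dots,s_m$, writing $n=p_1^{s_1}\cdots p_m^{s_m}$, both $\tau(n^2)$ and the structure of divisors of $n^2/k$ are controlled by a bound depending only on $s_1,\dots,s_m$ and on $m$, so the upper bound on $k$ becomes a constant $c=c(s_1,\dots,s_m)$-type quantity, whereas the lower bound on $k$ grows with $p_1$; hence $p_1>c$ is impossible.

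Granting $(i)$, parts $(ii)$–$(iv)$ are the finite/explicit residue. For $(ii)$, $n=p^s$ a prime power: then $n^2=p^{2s}$, every rectangle has area $A=p^{2s}/k=p^{2s-t}$ for some $t$ with $p^t=k$, so $k=p^t$ and each rectangle is $p^a\times p^b$ with $a+b=2s-t$; there are at most $s-t/2+1$ incongruent such shapes, far fewer than $k=p^t$ unless $t$ is tiny, and a direct check (or the $(i)$-type inequality made explicit, since here $c$ is a small absolute constant) rules out every case — including $p=2$, the only genuinely delicate one, which one disposes of by hand. For $(iii)$, $n$ a product of up to four distinct primes: here $c=c(1,1),\,c(1,1,1),\,c(1,1,1,1)$ are explicit small numbers, so $(i)$ already excludes all but finitely many such $n$, and those remaining finitely many are checked directly, reusing the small-case analysis already carried out for $d_k(n)$ and the packing impossibility arguments of the previous section (e.g.\ ``no $k$ tiles of the available shapes sum to $n^2$''). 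For $(iv)$, $n$ a product of two or three primes with \emph{arbitrary} multiplicities, one varies $m\le 3$ but lets $s_1,\dots,s_m$ range; the key point is that the bound in $(i)$, while stated for fixed exponents, actually has the form $k\le$ (a function of $m$ and the largest exponent) and can be combined with $k=p_1^{s_1'}\cdots$ to again pin $k$ down, after which a uniform contradiction follows. I expect the main obstacle to be making the lower bound ``$k$ must be large'' rigorous and quantitative: it is intuitively clear that you cannot perfectly tile a big square with a handful of equal-area, pairwise-incongruent integer rectangles, but turning this into a clean inequality $k\ge f(n)$ that beats $\tfrac12\tau(n^2/k)$ requires care — one likely route is to observe that the rectangles along any edge have widths summing to $n$ with all widths distinct among rectangles of a fixed area, forcing $\gtrsim\sqrt n$ of them, or to exploit that the common area $A$ must have both a divisor $\le \sqrt A$ close to $\sqrt A$ and the total count of its small divisors is $O(A^{\varepsilon})$. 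The arithmetic bookkeeping in $(i)$ (bounding $\tau(n^2/k)$ uniformly over divisors $k$ of $n^2$ when $n$ has the prescribed prime signature) is routine once the shape of the inequality is fixed.
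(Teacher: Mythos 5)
Your outline gets the first half of the paper's mechanism right: in a perfect partition all $k$ rectangles have area $A=n^2/k$, so $k\mid n^2$ and each rectangle is an unordered divisor pair of $A$, whence $k$ is bounded above by a count of admissible shapes. But the second half has a genuine gap, and it is exactly the one you flag. You try to obtain a lower bound on $k$ from a packing/fault-line argument (``$k$ must be at least of order $n$'', or $\gtrsim\sqrt n$), which you do not prove and which the theorem does not need. The paper's lower bound is purely arithmetic: since $A=n^2/k$ is an integer and $k>1$, every prime factor of $k$ lies in $\{p_1,\ldots,p_m\}$, hence $k\ge p_1$. That one observation is the engine of $(i)$: the number of admissible shapes is at most a quantity of the form $\frac12\prod_{i=1}^m(2s_i-r_i+1)$ (minus a correction), which depends only on the exponents, while $k\ge p_1$, so $p_1$ larger than that constant is a contradiction. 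Without a rigorous lower bound on $k$ your proof of $(i)$ does not close.

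There are two further problems. First, your shape count uses only ``divisors of $A$ up to $\sqrt A$'' and drops the constraint that \emph{both} sides be at most $n$; for $(ii)$ this is fatal. With $n=p^s$ and $k=p^r$, the number of unordered factorizations of $A=p^{2s-r}$ is about $s-r/2+1$, which for $p=2$, $r=1$ exceeds $k=2$ for every large $s$, so no contradiction arises, and ``disposing of $p=2$ by hand'' is an infinite family in $s$, not a finite check. Imposing $x,y\le n$ collapses the count to $\lfloor r/2\rfloor+1$, and $p^r\le\lfloor r/2\rfloor+1$ fails for all $p\ge2$, $r\ge1$ — that is the paper's argument. Second, in $(iii)$ and $(iv)$ the cases with small $p_1$ are again infinite families (e.g.\ all $n=2p_2p_3p_4$), and they are not settled by the $d_k(n)$ computations of the earlier sections. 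The paper sharpens the count of sides $\le n$ (a Motzkin-path count giving the thresholds $p_1>1,2,6$ for ${\cal N}(1,1)$, ${\cal N}(1,1,1)$, ${\cal N}(1,1,1,1)$, and $p_1>8,6$ for ${\cal N}(2,1)$, ${\cal N}(1,2)$) and then invokes the external result of \cite{dflm20} that no perfect Mondrian partition exists with $k\le 8$ rectangles to cover the small-$p_1$ range. Your proposal has no substitute for that ingredient, so $(ii)$–$(iv)$ remain open as written.
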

\begin{proof}
We start by proving $(ii)$ since it is a special case.
Let $S$ be a square of side length $n=p^s$, with $p$ a prime, having a perfect Mondrian partition ${\cal P}$ with $k$ (non-congruent) rectangles $R_1,\ldots, R_k$. Then, $A(S)=p^{2s}$ and, since the partition is perfect, $k$ must divide $n$. So, we can write $k=p^r$ for some integer $r\le 2s$. Moreover, the area of each rectangle is $A(R_i)=\frac{n^2}{k}=p^{2s-r}$ for $i=1,\ldots,k$.
Now, since the partition is Mondrian, all the sides $x_i$, $y_i$ of the rectangles must be different, and of the form $x_i=p^{\sigma_i}$ and $y_i=p^{\tau_i}$ with $\sigma_i+\tau_i=2s-r$ for $i=1,\ldots,k$. Furthermore, such sides cannot be larger than the side $n=p^s$ of $S$. Thus, the total number of `available' squares is the number
$$
K:=|\{(x_i,y_i): \{x_i, y_i\}\cap\{x_i, y_i\}=\emptyset, \mbox{for $i \neq j$}, x_i+y_i=2s-r, x_i,y_i\le s\}|.
$$
These pairs are: $(s,s-r)$, $(s-1,s-r+1)$, \ldots, $(\frac{2s-r}{2},\frac{2s-r}{2})$ if $r$ is even;
and $(s,s-r)$, $(s-1,s-r+1)$, \ldots, $(\frac{2s-r+1}{2},\frac{2s-r-1}{2})$ if $r$ is odd.
So, $K=\frac{r+2}{2}$ if $r$ is even; and $K=\frac{r+1}{2}$ if $r$ is even. Then, from $k\le K$,
we get the inequalities
$$
p^r\le \frac{r+2}{2}\quad\mbox{if $r$ is even,}\qquad \mbox{and}\qquad
p^r\le \frac{r+1}{2}\quad \mbox{if $r$ is odd.}
$$
But such inequalities are not satisfied for any $p\ge 2$ and $r\ge 2$, so that there is no perfect Mondrian partition of a square with side a prime power.\\
\\
Now, let us prove the more general case $(i)$.
Let $S$ be a square of side length $n=p_1^{s_1}\cdots p_m^{s_m}$, with primes $p_1<\cdots< p_m$, and let $s=\max\{s_1,\ldots,s_m\}$. As before, assume that $S$ has a perfect Mondrian partition ${\cal P}=\{R_1,\ldots, R_k\}$. Now, $A(S)=p_1^{2s_1}\cdots p_m^{2s_m}$ and, as the partition is perfect, $k$ must divide $n$. Then, let $k=p_1^{r_1}\cdots p_m^{r_m}$ for some integers $r_i\le 2s_i$, for $i=1,\ldots,m$, and the area of each rectangle is constant, namely $A(R_j)=\frac{n^2}{k}=p_1^{2s_1-r_1}\cdots p_m^{2s_m-r_m}$ for $j=1,\ldots,k$.
Again, all the sides $x_j$, $y_j$ of the rectangles must be different, with $x_jy_j=A(R_j)$, and $x_j,y_j\le n$. Let us see that the total number $K$ of `available' squares satisfies the bound
\begin{equation}
\label{bound-K}
K\le \frac{1}{2}\prod_{i=1}^m (2s_i-r_i+1)-\prod_{i=1}^m (s_i-r_i+1) +1,
\end{equation}
if $\prod_{i=1}^m (s_i-r_i+1) >1$, and
\begin{equation}
\label{bound-Kb}
K\le \frac{1}{2}\prod_{i=1}^m (2s_i-r_i+1),
\end{equation} 
otherwise.
Indeed, the first term in \eqref{bound-K}--\eqref{bound-Kb} stands for {\bf all} the possible pairs $(x_j,y_j)$
such that $x_j=p_1^{2s_1-r_1}\cdots p_m^{2s_m-r_m}$ with $r_i\in \{0,\ldots, 2s_i-r_i\}$, for $i=1,\ldots,m$ (note that the total number of such $r_i$ is $2s_i-r_i+1$). But,  since each $x_j$ must not be larger than
$n=p_1^{s_1}\cdots p_m^{s_m}$, the second term in \eqref{bound-K}, if it is at least $2$, it means that we can subtract all the values $x_j=p_1^{t_1}\cdots p_m^{t_m}$ with $t_i\in\{s_i,\ldots, 2s_i-r_i\}$ for $i=1,\ldots,m$ (the number of such $t_i$ is $s_i-r_i+1$) in all cases except for one. This exception is for the case $t_i=r_i$ for $i=1,\ldots,m$ (that is,
$x_j=p_1^{s_1}\cdots p_m^{s_m}=n$), which justifies the last term $+1$ in \eqref{bound-K}.\\
\\
Now, on the one hand, we have that the number $k(>1)$ of rectangles
satisfies $k\ge |{\cal N}(1,0,\ldots,0)|= p_1$ (since $p_1<\ldots<p_m$),
and, on the other hand, the maximum number of available rectangles, with $r_1=1$ and $r_2=\cdots=r_m=0$, satisfies
\begin{equation}
\label{bound2-K}
K\le K'=\frac{1}{2}(2s+1)^{m-1}2s-(s+1)^{m-1}s +1,
\end{equation}
(since $s\ge s_i$ for $i=1,\ldots,m$ ).
Consequently, if $k\ge p_1> K'\ge K$, we have a contradiction and $S$ cannot have a perfect Mondrian partition.\\
\\
In particular, notice that,  by taking $s_1=\cdots=s_m=1$, the bound \eqref{bound2-K} becomes
\begin{equation}
\label{bound3-K}
K\le 3^{m-1}-2^{m-1}+1,
\end{equation}
 but, as we will see next, a more precise analysis can be done.
\\
With this aim, note that, in this case, $n=p_1p_2\cdots p_m$. If the number of rectangles is the minimum $k=p_1$, then the area of each rectangle is $A(R_j)=x_jy_j=p_1p_2^2\cdots p_m^2$ for $j=1,\ldots,m$, with $x_j,y_j\le p_1p_2\cdots p_m$. Now, all the possible pairs $(x_j,y_j)$ are $2\cdot 3^{m-1}/2=3^{m-1}$, as expected from \eqref{bound3-K}. To this quantity, we can subtract all the values $x_j=p_1^{t_1}\cdots p_m^{t_m}$ with $t_1\in\{0,1\}$, $t_j\in\{0,1,2\}$ for $j=2,\ldots,m$ such that $t_1+\cdots+t_m\ge m$ and with the condition that, if $t_i=0$ for some $i$, there is at least one $j>i$ such that $j=2$ (excepting  the case $t_1=\cdots =t_m=1$). Alternatively, the second condition is equivalent to require that the partial sums $t_1+\cdots +t_i$, for $i=1,\ldots,m$, reach their maximum at $i=m$.   Note that, since $p_1<p_2<\cdots<p_m$, each of such $n$-tuples $(t_1,\ldots,t_m)$ satisfies $p_1^{t_1}\cdots p_m^{t_m}>n$, as required.
At this point, it is useful to characterize each possible $m$-tuple as  an $m$-path in a grid with integer points, starting from $(0,0)$, and using $i$-th step
$D=(1,-1)$ if $t_i=0$, $H=(1,1)$ if $t_i=1$, and $U_i=(1,1)$ if $t_i=2$, for $i=2\ldots,m$; and only $D$ or $H$ for the first step.
(U=up-step of slope $1$, D=down-step of slope $-1$, H=horizontal-step of slope $0$.)
Then, if, for the moment, we include the case $HH...H$ ($t_1=\cdots=t_m=1)$, the numbers $a(m)$ of such paths are
\begin{equation}
\label{a(m)}
a(1)=1,\  a(2)=3,\  a(3)=8,\  a(4)=22,\  a(5)=61,\  a(6)=171, \ldots
\end{equation}
The good news is that this is a known sequence of the numbers, 
corresponding to the so-called {\em Moztkin paths}. In the
 {\em On-line Encyclopedia of Integer Sequences} \cite{Sl}, this is the sequence A025566, where the following formula is provided (starting from $m=1$ instead of $m=0$, as it is presented in  \cite{Sl}):
\begin{equation}
\label{f:a(m)}
a(m)=\sum_{k=0}^{\lfloor\frac{m+1}{2}\rfloor} {{m-1}\choose{k}} {{m-k+1}\choose{k}}.
\end{equation}
To prove that the number of our $m$-tuples $(t_1,\ldots,t_m)$ coincides with $a(m)$ is better to look at a related sequence that also appears in \cite{Sl},  labeled with A005773. Namely, for $m=1,2,\ldots,b(m)$, it corresponds to the number of paths contained in an $m\times m$ grid from $(0,0)$ to the line $x=m-1$, using also the steps $U$, $H$, and $D$, as above. (In this case, only $H$ or $U$ are allowed as first step.) The sequence is now
\begin{equation}
\label{b(m)}
b(1)=1,\  b(2)=2,\  b(3)=5,\  b(4)=13,\  b(5)=35,\  b(6)=96,\ b(7)=267, \ldots
\end{equation}
and it corresponds to the formula
\begin{equation}
\label{f:b(m)}
b(m)=\sum_{k=0}^{\lfloor\frac{m}{2}\rfloor} {{m-1}\choose{k}} {{m-k}\choose{k}}.
\end{equation}
For example, the paths for $m=3,4$ are shown in Figure \ref{fig:paths}.
\begin{figure}[t]
\vskip .5cm
\centering
\includegraphics[width=12cm]{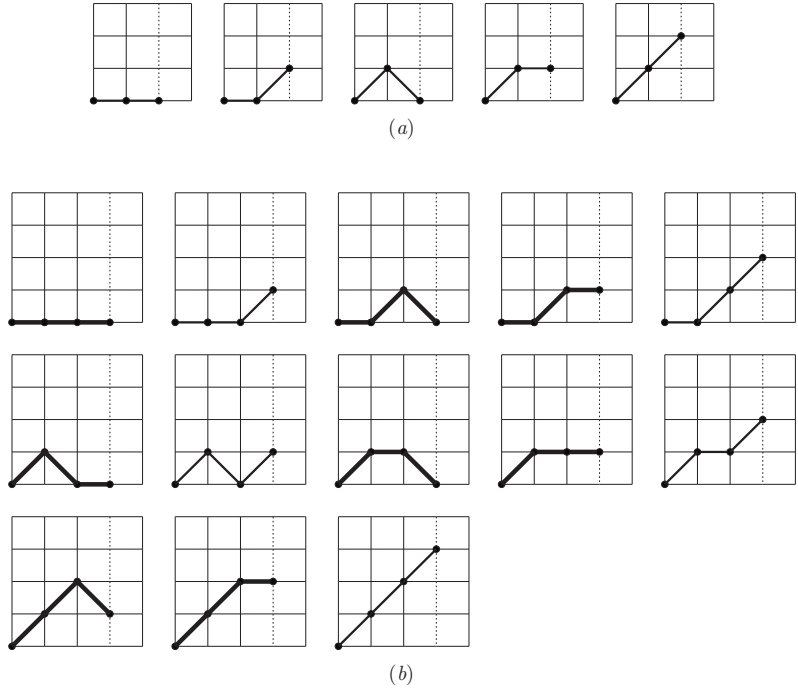}
\caption{Paths from $(0,0)$ to the line $x=m-1$ for $(a)$ $m=3$, and $(b)$ $m=4$. }\label{fig:paths}
\end{figure}
Comparing the sequences \eqref{a(m)} and \eqref{b(m)}, the reader can note that $a(m)=b(m+1)-b(m)$. The reason for this is clear if, for instance, we look at Figure \ref{fig:paths}. Our paths representing the desired $m$-tuples, say $(t_1,t_2,t_3)$ $(m=4)$, are those of Figure \ref{fig:paths} $(b)$, not ending with the step $U$, and looked `upside down' (those drown in bold line). Besides, the other paths, after removing their last step $U$ are in correspondence with those of   \ref{fig:paths} $(a)$.
Summarizing,  we proved that $a(m)=b(m+1)-b(m)$ is the number we are looking for (of course, this equality can be checked by using \eqref{f:a(m)} and \eqref{f:b(m)}.)
Thus, for any square $S$ with side $n=p_1\cdots p_m$, where $p_1<\cdots< p_m$, we conclude that, if
\begin{equation}
\label{cond-on-p1}
p_1>3^{m-1}-\sum_{k=0}^{\lfloor\frac{m+1}{2}\rfloor} {m-1\choose{k}} {{m-k+1}\choose{k}}+1,
\end{equation}
where the last one stands for the available case $x_i=p_1\cdots p_m$,
then $S$ has not a perfect Mondrian dissection with $k\ge p_1$ rectangles.

In particular, $(iii)$ is proved by taking $m=2,3,4$ in \eqref{cond-on-p1}, which gives the conditions
$k\ge p_1>1$, $k\ge p_1>2$, and $k\ge p_1>6$, respectively. Besides, in Dalf\'o, Fiol, L\'opez, and Mart\'{\i}nez-P\'erez \cite{dflm20}, it was proved that there are no perfect Mondrian dissections of a square with $k\le 8$ rectangles.

Finally, let us prove $(iv)$. The case of $s=2$  primes follows immediately from $(ii)$ and $(iii)$. For the case of $s=3$,
 the same lines of reasoning allow us to prove the following.
\begin{itemize}
\item
If $k\ge p_1>8$, there is no perfect Mondrian partition of a square $S$ with side length $n=p_1^2p_2$,  that is,  $n\in {\cal N}(2,1)$.
\item
If $k\ge p_1>6$, there is no perfect Mondrian partition of a square $S$ with side length $n=p_1p_2^2$,  that is,  $n\in {\cal N}(1,2)$.
\end{itemize}
Indeed, for instance, in the first case, $A(S)=p_1^4 p_2^2$ and, if $k=p_1$, then $A(R_i)=p_1^3 p_2^2$. Thus, the number of  possible side lengths is $4\cdot 3- 4=8$ (since the four sides $p_1p_2^2$, $p_1^2p_2^2$, $p_1^3p_2$, and $p_1^3p_2^2$ are not allowed because they are greater than $n$).
Thus, $(iv)$ follows again from the referred results in  \cite{dflm20}.
\end{proof}

Let us see a  consequence of our approach.
Following the conventional notation, let $\pi_s(n)$ be the number of integers not greater than $n$, with exactly $s$ prime divisors (not necessarily distinct). Such numbers are usually called {\em $s$-almost primes}. For instance, every number in ${\cal N}(s_1,\ldots,s_m)$ is an {\em $s$-almost primes} with $s=s_1+\cdots+s_m$. A classic result of Landau states that    $\pi_s(n)$ is asymptotic to 
\begin{equation}
\label{landau}
 \pi_s(n) \sim \frac{n}{\log n}\frac{(\log(\log(n)))^{s-1}}{(s-1)!},
\end{equation}
see, for instance, Tenenbaum \cite{t15}.
\begin{corollary}
Let  $n(x)$ be the number of side lengths at most $x$  of squares not having such a partition, as before. 
Then, its `density' $n(x)/x$ is asymptotic to
$$
\frac{n(x)}{x}\sim \frac{(\log(\log(x)))^2}{2\log(x)}.
$$
\end{corollary}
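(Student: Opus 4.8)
The plan is to read off the estimate from parts $(ii)$, $(iii)$, and $(iv)$ of Theorem~\ref{theo-n(x)} combined with Landau's asymptotic \eqref{landau}. The point is that those three parts together cover \emph{every} integer having at most three prime factors counted with multiplicity: part $(ii)$ disposes of all prime powers (in particular $p$, $p^2$, $p^3$), part $(iv)$ disposes of all products of two or three primes with repetition allowed (hence also $p^2q$ and $pq$, $pqr$), and part $(iii)$ covers the remaining square-free cases. Thus, writing $\pi_s(x)$ for the counting function of the $s$-almost primes as in \eqref{landau}, every $n\le x$ that is a $1$-, $2$- or $3$-almost prime satisfies $d(n)\neq 0$, and since these three classes are disjoint,
\[
n(x)\ \ge\ \pi_1(x)+\pi_2(x)+\pi_3(x).
\]

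Next I would insert Landau's formula \eqref{landau} into each summand. Since
\[
\pi_1(x)\sim\frac{x}{\log x},\qquad
\pi_2(x)\sim\frac{x\log\log x}{\log x},\qquad
\pi_3(x)\sim\frac{x(\log\log x)^2}{2\log x},
\]
the third term dominates the first two (each additional prime factor contributes one more factor $\log\log x$), so $\pi_1(x)+\pi_2(x)+\pi_3(x)\sim \pi_3(x)$, and therefore
\[
\frac{n(x)}{x}\ \ge\ \frac{\pi_1(x)+\pi_2(x)+\pi_3(x)}{x}\ \sim\ \frac{(\log\log x)^2}{2\log x},
\]
as claimed. One checks at once that this strengthens O'Kuhn's estimate \eqref{O'Kuhn}: the quotient of the two right-hand sides is asymptotic to $\log\log x/(2C)\to\infty$. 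I would also note that part $(i)$ of the theorem supplies still more defective side lengths --- for each fixed shape $(s_1,\dots,s_m)$, all $n\in{\cal N}(s_1,\dots,s_m)$ whose least prime exceeds the explicit constant in \eqref{bound2-K} (or \eqref{cond-on-p1}) --- but such contributions only increase $n(x)$, so they leave the displayed lower bound untouched, and it is cleanest to phrase the density through the $3$-almost primes, whose count is the governing term.

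The step that needs care is the companion upper bound $n(x)\lesssim \frac{x(\log\log x)^2}{2\log x}$, i.e. reading ``$\sim$'' as a genuine two-sided asymptotic. Proving it would amount to showing that almost every $n\le x$ \emph{does} admit a perfect Mondrian partition, which is presently unattainable: not even one integer with $d(n)=0$ is known to exist. Hence what the argument above actually establishes --- and the sense in which the statement should be understood --- is the lower bound $n(x)/x\gtrsim \frac{(\log\log x)^2}{2\log x}$; that is, the density of (provably) defective side lengths grows at this rate, and no matching upper estimate is available with current techniques.
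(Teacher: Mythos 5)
Your argument is essentially the paper's own proof: the authors likewise invoke Theorem~\ref{theo-n(x)}$(iv)$ (every product of at most three primes, counted with multiplicity, has $d(n)\neq 0$) together with Landau's asymptotic \eqref{landau}, sum over $s=1,2,3$, and keep the dominant $s=3$ term. Your closing caveat is well taken and in fact sharper than the source: the paper writes the conclusion as a two-sided `$\sim$', but the argument only yields a lower bound on $n(x)$, since a matching upper bound would require that almost all remaining side lengths admit perfect Mondrian partitions, which is open (no perfect partition is known at all), so the corollary is properly read as the asymptotic lower bound you state, which is exactly the sense in which it improves O'Kuhn's estimate \eqref{O'Kuhn}.
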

\begin{proof}
By Theorem \ref{theo-n(x)} $(iv)$, we know  that there is no perfect Mondrian partition of a square $S$ of side $n$ being the product of $s(\le 3)$ primes (not necessarily distinct).  Then, by using \eqref{landau}, the number $n(x)$ of such squares, with side $n\le x$,  is asymptotic to
\begin{equation}
\label{n(x)}
n(x)\sim \sum_{s=1}^{3} \frac{x}{\log x}\frac{(\log(\log(x)))^{s-1}}{(s-1)!},
\end{equation}
and the result follows by considering the term of the greatest order (with $s=3$).
\end{proof}
For the sake of comparison, notice that the result of O'Kuhn in \eqref{O'Kuhn} corresponds to take only the first two terms (for $s=1,2$) of the sum in \eqref{n(x)}.


\subsection*{Acknowledgments}
The research of C. Dalf\'o and N. L\'opez has been partially supported by grant MTM2017-86767-R (Spanish Ministerio de Ciencia e Innovaci\'on). The research of C. Dalf\'o and M. A. Fiol has been partially supported by
AGAUR from the Catalan Government under project 2017SGR1087 and by MICINN from the Spanish Government under project PGC2018-095471-B-I00.


\end{document}